\title[Intersection cohomology and Severi varieties of quartic surfaces]
{Intersection cohomology and Severi varieties of quartic surfaces}
\theoremstyle{plain}
\newtheorem{theorem}{Theorem}[section]
\newtheorem{corollary}[theorem]{Corollary}
\theoremstyle{definition}
\newtheorem{remark}[theorem]{Remark}
\newtheorem{notations}[theorem]{Notations}
\DeclareMathOperator{\con}{Con}
\DeclareMathOperator{\Def}{Def}
\DeclareMathOperator{\sing}{Sing}
\newcommand{\vc}{\ensuremath{\mathcal{V}}}
\newcommand{\vcd}{\ensuremath{\mathcal{V}}_{\delta}}
\newcommand{\ic}{\ensuremath{\mathcal{I}}}
\newcommand{\oc}{\ensuremath{\mathcal{O}}}
\newcommand{\ac}{\ensuremath{\mathcal{A}}}
\newcommand{\fc}{\ensuremath{\mathcal{F}}}
\newcommand{\ec}{\ensuremath{\mathcal{E}}}
\newcommand{\nc}{\ensuremath{\mathcal{N}}}
\newcommand{\hc}{\ensuremath{\mathcal{H}}}
\newcommand{\rc}{\ensuremath{\mathcal{R}}}
\newcommand{\tc}{\ensuremath{\mathcal{T}}}
\newcommand{\mc}{\ensuremath{\mathcal{M}}}
\newcommand{\lc}{\ensuremath{\mathcal{L}}}
\newcommand{\xc}{\ensuremath{\mathcal{X}}}
\newcommand{\qc}{\ensuremath{\mathcal{Q}}}
\newcommand{\Sc}{\ensuremath{\mathcal{S}}}
\newcommand{\bP}{\mathbb{P}}
\newcommand{\bPd}{{\mathbb{P}^{\vee}}}
\newcommand{\vX}{X^{\vee}}
\newcommand{\bQ}{\mathbb{Q}}
\newcommand{\bC}{\mathbb{C}}
\begin{document}
	
	\title[Intersection cohomology and Severi varieties of quartic surfaces]
	{Intersection cohomology and Severi varieties of quartic surfaces}

	\author{Davide Franco }
	\address{Universit\`a di Napoli
		\lq\lq Federico II\rq\rq, Dipartimento di Matematica e
		Applicazioni \lq\lq R. Caccioppoli\rq\rq, Via Cintia, 80125
		Napoli, Italy} \email{davide.franco@unina.it}
	
	\author{Alessandra Sarti}
	\address{Universit\'e de Poitiers, Laboratoire de Math\'ematiques et Applications,  UMR 7348 du CNRS, 11 bd Marie et Pierre Curie, 86073 Poitiers Cedex 9, France} \email{alessandra.sarti@univ-poitiers.fr}
	
	\abstract  We give two explicit versions of the decomposition theorem  of Beilinson, Bernstein and Deligne applied to the universal family of quartic surfaces of $\mathbb{P}^3$. The starting point of our investigation is the remark that the nodes of a quartic surface impose independent conditions to the linear system $\mid \oc_{\bP^3}(4)\mid$. Although this property is known in literature, we provide a different argument more suited to our purposes. By a result of \cite{DGF}, the independence of the nodes implies in turn that each component of Severi's variety is smooth of the expected dimension
	and that the dual variety  is a divisor with normal crossings around Severi's variety. This allows us to study the complex $R\pi{_*}\mathbb{Q}_{\xc}$, the derived direct image of the constant sheaf over the universal family of quartic surfaces $ \mathcal{X} \stackrel{\pi}{\longrightarrow} \bP^{34}$, both in the open set parametrizing smooth and nodal quartics and in a tubular neighborhood of the variety of Kummer surfaces. We obtain in both cases an explicit decomposition and a formality result for the complex $R\pi{_*}\mathbb{Q}_{\xc}$.

	\bigskip\noindent {\it{Keywords}}:  Kummer surfaces, Nodes,  Severi varieties, Intersection cohomology,
	Decomposition Theorem,  Normal functions,

	\medskip\noindent {\it{MSC2010}}\,:  Primary 14B05; Secondary 14E15, 14F05,
	14F43, 14F45, 14M15, 32S20, 32S60, 58K15.
	
	\endabstract

	\maketitle

	\section{Introduction}
	
	\vskip3mm
	
	We consider the Veronese variety of $\bP^3$ imbedded in $\bP:=\bP^{34}$ by means of the linear system of quartic surfaces:
	$$X\subset \bP:=\mid \oc_{\bP^3}(4)\mid=\bP^{34}.$$ 
	We also denote by $X^{\vee}\subset \bP ^{\vee}$ the \emph{dual hypersurface}
	of $X$ and by $\mathcal{X}$  the \emph{universal hyperplane family} $$\mathcal{X}\subset X\times \bPd,$$ equipped with its natural projections:
	$$ X \stackrel{q}{\longleftarrow} \mathcal{X} \stackrel{\pi}{\longrightarrow} \bPd, \quad \quad \dim \xc = 36.
	$$
	
	Our main aim in this paper is to take a closer look at Severi's varieties $\vcd\subset \vX$ in relation to the behavior of the derived direct image $R\pi{_*}\mathbb{Q}_{\xc}$  ``around'' the dual hypersurface $\vX$.
	Specifically,
	it is well known that a quartic surface of $\bP ^3$ can have at most $16$ \emph{ordinary double points} (\textit{nodes} for short, compare with \cite[pag. 9]{Catanese} and \cite[Chapter V, Section 16]{BHPV}). Fix $1\leq \delta \leq 16$ and denote by   $\vc_{\delta}\subset X^{\vee}$ the locally closed subset parametrizing quartic surfaces with $\delta$ nodes. Assume $S=X\cap H\subset \bP^3$, $H\in \vcd$, is a quartic surface with $\delta$ nodes.  It is well known that any nodal quartic surface is \emph{unobstructed}, namely  that, whatever the number of singularities of $S$ is,
	
	$$
	{\text{\it the nodes of $S$ impose independent conditions to the linear system $\mid\oc_{\bP^3}(4)\mid$}}.
	$$ 
	\vskip2mm
	
	This fact is generally proved by resorting to deformation theory and showing that a nodal K3 surface is unobstructed (compare with \cite[Definition 34]{Catanese}). In this way one sees that the deformations of the surface have a submersion onto the local deformations of the singularities, so that one can obtain independent smoothings of all the singular points 
	(see for instance \cite{Wahl} and \cite[Section 1.3]{Catanese}).  Another  approach that goes back to an idea of Severi consists in proving directly that the nodes of a surface of degree $d$ of $\bP^3$ impose  independent conditions on adjoints of degree $\geq 2d - 5$ (we recall that a $k$-adjoint of $S$ is a surface of degree $k$ containing the nodes).  This is accomplished by means of a careful  study of the strict transform of the surface in the blowing up of $\bP^3$ at the nodes (compare with \cite[Section 2]{Nobile}).  In this paper we provide another proof of the unobstructedness property which mainly makes use of the smoothness of the universal deformation of a minimal resolution of a quartic nodal surface in the moduli space of quasi-polarized K3 surfaces combined with the universal property of Hilbert scheme (compare with Theorem \ref{indepcond}). We do this both in order to be reasonably self-contained  and also because our approach is most suited for the  local study of the derived direct image $R\pi{_*}\mathbb{Q}_{\xc}$.
	Furthermore, we believe that the techniques used in this work can be  extended to other examples of nodal hyperplane sections of Fano threefolds.
	
	Combining deformation theory with \cite[Theorem 3.3]{DGF}, the property above implies in turn that $\vcd$ is smooth of the expected dimension
	and that (cfr. Corollary \ref{normcross})
	\emph{the dual variety $\vX$ is a divisor with normal crossings around Severi's variety $\vcd$}.
	\vskip2mm
	As we said above, our next aim in this paper is to apply this result to understand   the behavior of the derived direct image $R\pi{_*}\mathbb{Q}_{\xc}$ in some neighborhood of Severi's varieties. The main ingredient is the  \emph{decomposition theorem  of Beilinson, Bernstein and Deligne}.
	By \cite[Sec. 2]{DeCMHodgeConj}, the decomposition theorem applied to $\pi$ provides a non-canonical decomposition
	\begin{equation}
		\label{decThmintr}  R\pi{_*}\mathbb{Q}_{\xc}\cong \bigoplus_{i\in \mathbb{Z}}\bigoplus _{j\in \mathbb{N}}IC(L_{ij})[-i-34)], \quad \text{in} \,\,\, D_c^b(\bPd),
	\end{equation}
	where $D_c^b(\bPd)$ denotes the derived category of $\bQ$-vector sheaves on $\bPd$ and  $IC(L_{ij})$ are the intersection cohomology complexes (see e.g. \cite{DeCMBAMS} for the definition). The local system $L_{ij}$ is supported on a suitable locally closed stratum of codimension  $j$ in $\bPd$ usually called \emph{support} of the decomposition. If $H \in U:=\bPd \backslash X^{\vee}$, then $\xc_H$ is smooth. Thus
	$\pi^{-1}(U) \rightarrow U$ is a smooth fibration and the sheaves  $R^{i}\pi{_*}\mathbb{Q}_{\xc}$ restrict to  local systems on  $U$, in the following denoted by $R^{i}\pi{_*}\mathbb{Q}_{\xc} \mid _U$. The general fibre of 
	$R^{2}\pi{_*}\mathbb{Q}_{\xc}\mid _U$ represents the intermediate cohomology of the (smooth) general fibre of $\pi$.
	
	Usually, the unique linear systems that are explicitely known are  those supported in $U$ (the \textit{general support}). In our case we have $$L_{i0}=R^{i}\pi{_*}\mathbb{Q}_{\xc} \mid _U$$ (compare with \cite[(2.5)]{DeCMHodgeConj}). 
	On the contrary, the supports appearing in the splitting (\ref{decThmintr}) and the local systems $L_{ij}$  are generally rather mysterious objects when $j\geq 1$. Such supports are contained in the dual hypersurface $\vX$.
	
	Our second main result is that \textit{all the supports other than the general one are disjoint from Severi's varieties $\vcd$:}
	
	\begin{theorem}\label{thmintro}
		Set $N\subset X^{\vee}$ be the closed set parametrizing quartic surfaces with at least one singular point that is not a node. Let $N^*:= \bPd  \backslash N$ be  the Zariski  open set parametrizing  smooth and nodal quartic surfaces.	For every $i$ such that $0\leq i \leq 4$, we have 
		$$IC(L_{i0})[-34]\mid_{N^*}=R^{i}\pi{_*}\mathbb{Q}_{\xc}\mid_{N^*},\quad \text{in} \quad D_c^b(N^*). $$	
		In particular, the decomposition theorem (\ref{decThmintr}) looks like
		$$R\pi{_*}\mathbb{Q}_{\xc}\mid_{N^*} \cong \bigoplus_{0\leq i \leq 4} R^i \pi{_*}\mathbb{Q}_{\xc}[-i]\mid_{N^*}$$
		in the open set $N^*$ .
	\end{theorem}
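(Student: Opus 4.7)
The plan is to exploit the NCD structure of $\vX$ around Severi's varieties (Corollary \ref{normcross}) to perform an explicit local analysis of the family $\pi$ near a nodal hyperplane section, and then to identify $R^i\pi_*\bQ_\xc|_{N^*}$ with the intermediate extension $IC(L_{i0})[-34]|_{N^*}$. Comparing this identification with the decomposition (\ref{decThmintr}) will force every non-general support appearing there to be contained in $N$, which yields both the identification and the stated simplification.

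Fix $H_0\in \vcd$ corresponding to a quartic $S_0=X\cap H_0$ with $\delta$ nodes $p_1,\ldots,p_\delta$. By Corollary \ref{normcross} one finds analytic coordinates $(t_1,\ldots,t_\delta,s_1,\ldots,s_{34-\delta})$ on a neighbourhood $V$ of $H_0$ in $\bPd$ with $\vX\cap V=\{t_1\cdots t_\delta=0\}$ and $\{t_i=0\}$ parametrizing the deformations preserving the $i$-th node. Properness of $\pi$ gives $(R^i\pi_*\bQ_\xc)_{H_0}=H^i(S_0)$, and a cofibre-sequence computation from the minimal resolution $\tilde S_0\to S_0$ (whose exceptional locus is $\delta$ disjoint $(-2)$-curves on a K3 surface) produces the Betti numbers $(1,0,22-\delta,0,1)$. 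Near each node $p_i$ the universal family is analytically the standard $A_1$-Milnor fibration in the parameter $t_i$; the vanishing $2$-spheres $\alpha_i\in H^2(S_t,\bQ)$ on a nearby smooth fibre $S_t$ satisfy $(\alpha_i,\alpha_j)=-2\delta_{ij}$ (the independence being a geometric consequence of Theorem \ref{indepcond}), and the local monodromies around the branches of $\vX$ are the corresponding Picard--Lefschetz reflections. The local monodromy group at $H_0$ is therefore $(\bZ/2)^\delta$, and its invariant subspace in $H^2(S_t,\bQ)$ has dimension $22-\delta$, matching the stalk above.

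To upgrade these stalk identifications to an equality of complexes, one observes that in $V$ the map $\pi$ decomposes, up to smooth fibrations, as a product of $\delta$ independent $A_1$-Milnor fibrations in the $t_i$-directions together with a smooth family away from the nodes. A Mayer--Vietoris argument combined with the classical formality of each single-node Milnor pushforward yields the local splitting
\[
R\pi_*\bQ_\xc|_V\cong \bigoplus_{0\leq i\leq 4}R^i\pi_*\bQ_\xc|_V[-i],
\]
and identifies each $R^i\pi_*\bQ_\xc|_V$ with the extension of $L_{i0}|_{V\cap U}$ by local monodromy invariants. Since the monodromy is of finite order $2$ and acts by reflections in orthogonal vectors attached to a NCD, this invariant extension coincides with the intermediate extension of $L_{i0}$, so that $R^i\pi_*\bQ_\xc|_{N^*}=IC(L_{i0})[-34]|_{N^*}$. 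Plugging back in (\ref{decThmintr}) and using the distinctness of simple IC summands forces every $IC(L_{ij})$ with $j\geq 1$ to have support disjoint from $N^*$, yielding the displayed decomposition.

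The main obstacle is to make the local ``product of Milnor fibrations'' description sufficiently precise to extract both the formality splitting and the IC identification at the level of constructible complexes. The crucial ingredient here is Theorem \ref{indepcond}: the independence of the nodes implies that the NCD coordinates $t_i$ can be genuinely taken as smoothing parameters of the individual nodes, so that the local analysis reduces by a K\"unneth/Mayer--Vietoris type argument to $\delta$ copies of the one-node case, where $R\pi_*\bQ_\xc$ is explicitly computable via classical Picard--Lefschetz theory.
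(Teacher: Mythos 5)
Your overall strategy is the same as the paper's: use Corollary \ref{normcross} to put $\vX$ in normal crossings form near a $\delta$-nodal quartic, describe the local monodromy by Picard--Lefschetz, identify $R^{i}\pi_*\bQ_{\xc}\mid_{N^*}$ with $IC(L_{i0})[-34]\mid_{N^*}$, and then feed this back into (\ref{decThmintr}) so that all supports with $j\geq 1$ are forced into $N$ (the paper does this last step by a stalkwise count showing the complementary summand has vanishing cohomology sheaves, which is what your ``distinctness of simple IC summands'' should be replaced by). Your stalk computations are correct: $b_2(S)=22-\delta$, the vanishing classes are pairwise orthogonal $(-2)$-vectors, and the local monodromy invariants have rank $22-\delta$, matching the paper's sequence (\ref{computeIC2}). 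A minor misattribution: orthogonality and linear independence of the $\sigma_\rho$ follow from disjointness of the Milnor balls and $\sigma_\rho^2=-2$; what Theorem \ref{indepcond} actually buys is the transversality of the branches of $\vX$, i.e.\ Corollary \ref{normcross} itself.

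The genuine gap is in the central step. The assertion that near $H_0$ the family $\pi$ ``decomposes, up to smooth fibrations, as a product of $\delta$ independent $A_1$-Milnor fibrations'', from which ``Mayer--Vietoris plus formality of the one-node case'' yields the local splitting, is not substantiated and is essentially the statement to be proved. More decisively, your key claim that the invariant extension of $L_{20}$ coincides with the intermediate extension because the monodromy consists of finite-order commuting reflections is precisely the paper's claim (\ref{claim}), namely $\hc^{l}(IC(L_{20})[-34])_H=0$ for all $l\geq 1$ and $H\in N^*\cap\vX$, and it needs a mechanism translating local monodromy data into stalks of the IC complex. The paper supplies one: by Cattani--Kaplan--Schmid and Kerr--Pearlstein, the stalk is the cohomology of the Koszul-type complex $B^p=\bigoplus N_{i_1}\cdots N_{i_p}\rc_H$ on the canonical extension; disjointness of the vanishing spheres gives $N_\alpha N_\beta=0$, so the complex lives in degrees $0$ and $1$, and the surjectivity of $H^2(S_t)\to\bigoplus_{\rho}\sigma_\rho^{\vee}\cdot\bQ$ (obtained by excision from $H^3(S)=0$, equivalently from the independence of the orthogonal anisotropic classes) kills $\hc^1$ and identifies $\hc^0$ with $H^2(S)\cong(R^2\pi_*\bQ_{\xc})_H$. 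Your write-up contains all the geometric inputs for this computation but never performs it; without it, or a direct verification of the support and cosupport conditions for the invariants sheaf along the normal crossings divisor, the identification $IC(L_{20})[-34]\mid_{N^*}=R^{2}\pi_*\bQ_{\xc}\mid_{N^*}$ is asserted rather than proved.
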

	We observe in passing that the result above says also that \textit{the derived direct image complex $R\pi{_*}\mathbb{Q}_{\xc}\mid_{N^*}$ is quasi isomorphic to the direct sum of its cohomology sheaves}. In other words, Theorem \ref{thmintro} can be seen as a \emph{formality theorem} for $R\pi{_*}\mathbb{Q}_{\xc}\mid_{N^*}$ (we recall that in the theory of  differential graded algebras a differential complex is called {\it formal} if it is quasi-isomorphic to its cohomology \cite[Definition 2]{Manetti}).
	
	It is well known that the perverse cohomology sheaves appearing in the decomposition theorem are semisimple (cfr. \cite[Theorem 1.6.1]{DeCMBAMS}).
	It is known that the perverse sheaves $IC(L_{i0})[-34]\mid_{N^*}=R^{i}\pi{_*}\mathbb{Q}_{\xc}\mid_{N^*}$ are simple in the Zariski open set $N^*$ (cfr. Corollary \ref{simplesummands}). Of course, there is no reason why they remain simple in a neighborhood of some Severi's variety $\vcd$. In the last section we take a closer look to the splitting ``around'' the variety $\vc:= \vc_{16}$  parametrizing Kummer quartics, providing the splitting in simple perverse sheaves of the restriction of $R\pi{_*}\mathbb{Q}_{\xc}$ to a tubular neighborhood of $\vc$.
	
	\vskip2mm
	
	\textbf{Acknowledgements. } 
	The authors thank Ciro Ciliberto for useful conversations.
	
	\section{Components of the Severi's variety}
	
	\vskip2mm

	In this paper, we will use notations very similar to those of  \cite[Section 2]{DeCMHodgeConj}. Specifically, we set
	$$X\subset \bP:=\mid \oc_{\bP^3}(4)\mid\cong\bP^{34}$$ 
	the Veronese variety of $\bP^3$ imbedded in $\bP$ by means of the linear system of quartic surfaces. We also denote by $X^{\vee}\subset \bP ^{\vee}$ the \emph{dual hypersurface}
	of $X$. Consider moreover the \emph{universal hyperplane family} $$\mathcal{X}\subset X\times \bPd,$$ equipped with its natural projections:
	$$ X \stackrel{q}{\longleftarrow} \mathcal{X} \stackrel{\pi}{\longrightarrow} \bPd, \quad \quad \dim \xc = 36.
	$$
	The quartic surfaces of  the complete linear system $\mid \oc_{\bP^3}(4)\mid$  are the  fibres of  $\pi$:
	$$\xc _H:= \pi^{-1}(H)=X\cap H, \quad \quad \dim \xc_H=2.$$
	Let $\con(X)\subset \mathcal{X}$ denotes the \emph{conormal variety} of $X$:
	$$\con(X):=\{(p,H)\in X\times \bPd: \,\, TX_p\subseteq H \}, \quad \quad  \dim \con(X) = 33,$$
	where $TX_p$ denotes the embedded tangent space to $X$ at $p$. We
	set $\pi_1: \con(X) \to \bPd$ the restriction of
	$\pi:\mathcal{X} \stackrel{}{\longrightarrow} \bPd$ to $\con(X)$.
	In particular, we have 
	$$X^{\vee}=\pi_1 (\con(X)).$$ 
	
	It is well known that a quartic surface of $\bP ^3$ can have at most $16$ \emph{ordinary double points} (\textit{nodes} for short, compare with \cite[pag. 9]{Catanese}). Fix $1\leq \delta \leq 16$ and denote by   $\vc_{\delta}\subset X^{\vee}$ the locally closed subset parametrizing quartic surfaces with $\delta$ nodes. Assume $S=X\cap H\subset \bP^3$, $H\in \vcd$, is a quartic surface with $\delta$ nodes.  Our main aim in this section is to prove that, whatever the number of singularities of $S$ is,
	\vskip1mm
	$$
	{\text{\it the nodes of $S$ impose independent conditions to the linear system $\mid\oc_{\bP^3}(4)\mid$}}.
	$$ 
	\vskip2mm
	\noindent
	As we explained in the introduction, this issue has already been addressed by several authors but our approach is different as it is mainly based on the smoothness of the universal deformation of a minimal resolution of a quartic nodal surface and   the universal property of Hilbert scheme. 
	Futhermore, by \cite[Theorem 3.3]{DGF}, the independence of  nodes implies that $\vc_{\delta}$ is smooth of the expected dimension.
	
	\begin{notations}\label{notations1} Fix $S=X\cap H\subset \bP^3$, $H\in \vcd$, and denote by $\widetilde S$ the minimal resolution of $S$. Then $\widetilde S$ is a \emph{quasi-polarized K3 surface} of degree $4$ (and genus $3$) \cite[Definition II 4.1]{Huy}. Set $D:= \Def (\widetilde S)$ the \emph{universal deformation } of $\widetilde S$ \cite[Corollary VI 2.7]{Huy}. By the local Torelli theorem \cite[Proposition VI 2.8]{Huy}, $D$ embeds isomorphically, via the period map, in the period domain for K3 surfaces  \cite[Section VI 1.1]{Huy} which is an open set in a quadric $Q\subset \bP ^{21}\cong \bP( H^2(\widetilde S, \bC))$. If $h\in H^2(\widetilde S, \bC)$ denotes the fundamental class of the curve of genus $3$ providing the quasi-polarization, then $$D ':=D \cap h^{\perp}$$ \emph{represents the deformation space of $\widetilde S$ in the moduli space of quasi-polarized K3 surfaces} of degree $4$  \cite[Section VI 2.4]{Huy}.
	\end{notations}
	
	\begin{theorem}
		\label{indepcond}
		Let $S\subset \bP^3$ be a quartic surface with $\delta$ nodes 
		$$S=X\cap H\subset \bP^3, \quad H\in \vcd.$$ 
		Then the set $\Delta:=\sing S$ of singular points of $S$ imposes independent conditions to the linear system $\mid\oc_{\bP^3}(4)\mid$:
		$$\dim H^0(\ic_{\bP ^3, \Delta }(4))= 35 -\delta.$$
	\end{theorem}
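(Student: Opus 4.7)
The plan is to compute the Zariski tangent space $T_{[S]}\vc_\delta$ in two complementary ways and match them.

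\emph{Direct deformation calculation.} Let $F$ be a defining equation of $S$. For a first-order deformation $V(F+t\phi)$, the Hessian of $F$ at each node $p_i$ is nondegenerate, so by the implicit function theorem the critical point persists at $p_i(t)=p_i+tv_i+O(t^2)$ with $v_i$ determined by $\nabla \phi(p_i)$; the condition that $p_i(t)$ lie on the deformed surface reduces modulo $t^2$ to $\phi(p_i)=0$. Consequently $T_{[S]}\vc_\delta$ coincides with $H^0(\ic_{\bP^3,\Delta}(4))/\langle F \rangle$, so $\dim T_{[S]}\vc_\delta = h^0(\ic_{\bP^3,\Delta}(4))-1$. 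The exact sequence $0\to \ic_{\bP^3,\Delta}(4)\to \oc_{\bP^3}(4)\to \oc_\Delta(4)\to 0$ gives the unconditional lower bound $h^0(\ic_{\bP^3,\Delta}(4))\geq 35-\delta$, so the theorem reduces to the matching upper bound $\dim T_{[S]}\vc_\delta\leq 34-\delta$.

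\emph{K3 moduli.} With Notations \ref{notations1} in force, the classes $h,[E_1],\dots,[E_\delta]$ in $H^2(\widetilde{S},\bC)$ are linearly independent: the $E_i$ are pairwise disjoint smooth rational curves with $E_i^2=-2$ and $h\cdot E_i=0$, while $h^2=4>0$. Hence $D'':=D'\cap \bigcap_{i=1}^{\delta} E_i^{\perp}$ is smooth of dimension $19-\delta$ and parametrizes deformations of $\widetilde{S}$ whose contraction is a $\delta$-nodal quartic K3 surface. The universal property of the Hilbert scheme, applied to the universal family over $D''$ with its relative quasi-polarization $\mathcal{L}$ (whose pushforward is locally free of rank $4$ by Riemann--Roch and base change), produces, after a choice of frame, a morphism $D''\times PGL(4)\to \vc_\delta\subset \bP^{34}$ whose image is a neighborhood of $[S]$; conversely, simultaneous resolution of families of $A_1$ singularities defines a germ map $\psi\colon (\vc_\delta,[S])\to (D'',[\widetilde{S}])$ recording the minimal resolution together with its quasi-polarization.

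\emph{Upper bound on the tangent space.} The two constructions identify $\vc_\delta$ locally with a $PGL(4)$-bundle over $D''$, quotiented by the finite group $\mathrm{Aut}(\widetilde{S},L)$. Since $h^0(T_{\widetilde{S}})=0$ for a K3 surface, $\mathrm{Aut}(\widetilde{S},L)$ is finite; therefore the kernel of $\psi_*\colon T_{[S]}\vc_\delta \to T_{\widetilde{S}} D''$ equals the $15$-dimensional tangent space to the $PGL(4)$-orbit through $[S]$. Combined with $\dim T_{\widetilde{S}} D'' = 19-\delta$ and rank--nullity for $\psi_*$, this yields $\dim T_{[S]}\vc_\delta\leq 15 + (19-\delta) = 34-\delta$, and hence $h^0(\ic_{\bP^3,\Delta}(4))=35-\delta$, as required.

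The main technical point is building $\psi$ rigorously and verifying that $\ker\psi_*$ is exactly $15$-dimensional. This requires the simultaneous-resolution result for families of $A_1$ surface singularities (so that $\psi$ lands in $D''$) and a careful use of the universal property of the Hilbert scheme in the inverse direction (so that every deformation of $\widetilde{S}$ in $D''$ arises as the resolution of a quartic in $\bP^3$, uniquely up to $PGL(4)$); both steps are standard, but tracking the marked exceptional classes $[E_i]$ through the period map requires care.
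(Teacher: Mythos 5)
Your overall strategy is the same as the paper's: pass to the minimal resolution $\widetilde S$, work in the quasi-polarized deformation space cut down by the exceptional classes (your $D''$ is the paper's $D_\delta$), bring in $\mathrm{PGL}(4)$ via frames of $H^0(L)$ and the universal property of the Hilbert scheme, and conclude by the non-degeneracy of the intersection form on $H^2(\widetilde S)$. Your reduction of the statement to $\dim T_{[S]}\vc_\delta\le 34-\delta$, and the identification of that tangent space via the persistence of nondegenerate critical points, also match the paper (which cites Chiantini--Sernesi for this). The reorganization around a map $\psi\colon(\vc_\delta,[S])\to(D'',[\widetilde S])$ plus rank--nullity, instead of the paper's map $\rho\colon\fc\to\bP^{\vee}$ from the $34$-dimensional frame bundle, is legitimate and in principle slightly leaner, since it would let you avoid proving that $d\rho$ is injective on the horizontal ($D'$) directions.

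However, the step that carries all the content --- $\dim\ker\psi_*\le 15$ --- is not proved, and the justification you offer is a non sequitur. Finiteness of $\mathrm{Aut}(\widetilde S,L)$ (or $h^0(T_{\widetilde S})=0$) controls the dimension of the \emph{set-theoretic} fibre of $\psi$, i.e.\ that it is a single $15$-dimensional orbit up to a finite group; it says nothing about the Zariski tangent space of the \emph{scheme-theoretic} fibre, which is what $\ker\psi_*$ is. Likewise, the assertion that ``the two constructions identify $\vc_\delta$ locally with a $\mathrm{PGL}(4)$-bundle over $D''$'' as an isomorphism of analytic germs is precisely equivalent to the theorem, so it cannot be invoked. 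What you actually need is the infinitesimal statement: every first-order equisingular deformation of $S$ inducing the trivial deformation of $(\widetilde S,L)$ is tangent to the $\mathrm{PGL}(4)$-orbit. One way to get it is via deformations of the morphism $f\colon\widetilde S\to\bP^3$ with fixed source: the pulled-back Euler sequence gives $h^0(f^*T_{\bP^3})=4\cdot h^0(L)-h^0(\oc_{\widetilde S})=15$, and $H^0(T_{\bP^3})\to H^0(f^*T_{\bP^3})$ is injective (its kernel is $H^0(T_{\bP^3}(-4))=0$), hence an isomorphism. The paper's version of this step is its analysis of the case $v_2=0$: an infinitesimal projectivity fixing the equation of $S$ gives a vector field on $\bP^3$ vanishing at the nodes, which lifts to $\bl_\Delta\bP^3$, restricts to $\widetilde S$, and contradicts $H^0(\widetilde S,T_{\widetilde S})=0$. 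Without some such argument your proof is incomplete at exactly the point where the theorem lives; the rest (the lower bound, $\dim T_{\widetilde S}D''=19-\delta$, the independence of $h,[E_1],\dots,[E_\delta]$) is correct bookkeeping.
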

	\begin{proof}
		Let $T_{\vc _{\delta}}$ denotes the Zariski tangent space of $\vc _{\delta}$ at $H$. By deformation theory (compare with \cite[p. 43]{CS}), we have the  identification:
		$$T_{\vc _{\delta}}\cong \mid H^0(\ic_{\bP ^3, \Delta }(4))\mid. $$ 
		The statement amounts to prove that 
		$$\dim T_{\vc _{\delta}}= 34 -\delta.$$
		We remark in passing that the condition above implies that $\vc _{\delta}$, in a neighborhood of $S$, is the complete intersection of the smooth branches of $X^{\vee}$ (at $S$), that have independent tangent hyperplanes
		\cite[Theorem 3.3]{DGF}.
		
		As in Notations \ref{notations1}, denote by $\widetilde S$ the minimal resolution of $S$. If $p:\mc \to D'$ is the universal deformation of $\widetilde S$ in the moduli space of quasi-polarized K3 surfaces of degree $4$, and
		$\Sc \to \mc$ the line bundle corresponding to $h$ (cfr. Notations \ref{notations1}), then $p_*(\Sc)$ is a vector bundle of rank $4$ on $D'$ by the  Riemann-Roch theorem (compare with  \cite[Lemma 2.1 and Proof]{Huy}).
		Consider the projective bundle 
		$$\fc:=\bP(Fr(p_*(\Sc))),$$
		where $Fr(p_*(\Sc))$  denotes the \emph{frame bundle} of the  direct image $p_*(\Sc).$
		The bundle $\fc$ is a principal bundle with fibre the projective linear group $\bP \mathbb{GL} (4, \bC)$ on $D'$ representing the family of deformations of $\widetilde S$ equipped with a birational morphism to a quartic in $\bP ^3$.
		Indeed, a point in $\fc$ is a pair $(F,S')$, where $S'$ belongs to $D'$  and $F$ is a frame of the linear system
		$$\mid H^0(S',\Sc\mid_{S'}) \mid ,$$
		thus representing a map
		$$S'\rightarrow \bP^3,$$
		whose image is a  quartic in $\bP^3$.
		
		By letting $S$ to vary in $D'$, we get a morphism
		\begin{equation}\label{univfam1}
			\gamma:\mc\times _{D'} \fc\rightarrow \bP^3 \times \fc,	
		\end{equation}
		whose image can be thought of as the \emph{universal family of  quartics that are birational to a surface in $D'$}. 
		
		By the universal property of Hilbert scheme, the image of (\ref{univfam1}) can be recovered from the universal family of quartic surfaces by pull-back through an analytic morphism 
		\begin{equation}
			\label{rho}
			\rho: \,\, \fc \longrightarrow	\bP ^{\vee} 
		\end{equation}
		(we observe in passing that the domain and the target of $\rho$ have the same dimension). 
		Denote by $x\in \fc$ the point corresponding to the composition 
		$$\widetilde S \rightarrow S \hookrightarrow \bP ^3.$$
		
		We claim that the differential of $\rho$ at $x$ has maximal rank. In order to prove the claim, we argue by contradiction and assume
		$$   0 \not = v=(v_1, v_2)\in \ker d\rho _x \subset \tc_{\fc,x} \cong \tc _{\bP \mathbb{GL} (4, \bC), e} \times \tc _{D', {\widetilde S}}$$
		We proceed in two steps, according that $v_2$ vanishes or not.
		
		If $v_2=0$, then $v$ is vertical and is represented by an element of the Lie algebra of $\bP \mathbb{GL} (4, \bC) _e$ fixing the equation of $S$.   In particular, the nodes $p_1, \dots , p_{\delta} \in S$ are fixed. Correspondingly, we have a vector field in $\bP^3$ which vanishes at each node. By abuse of notations, we still denote by $v$ such a vector field. Consider the blow-up of $\bP$ at the nodes:
		
		$$\widetilde{\bP }^3 :=Bl_{\Delta} \bP^3, \quad \Delta:=\{p_1, \dots ,p_{\delta}\},  $$
		
		\[
		\xymatrix{
			E \ar[d] \ar[r]^j& \widetilde \bP ^3\ar[d]^f \\
			\Delta \ar[r]^i & \bP ^3. 
		}
		\]

		Consider the exact sequence describing the tangent bundle of $\widetilde{\bP }^3$ \cite[Lemma 15.4]{Fulton}
		$$0\rightarrow \tc _{\widetilde{\bP }^3} \rightarrow f^* \tc_{\bP^3} \rightarrow j_* \qc \rightarrow 0,$$
		where $\qc$ denotes the universal quotient bundle of the exceptional divisor $E$. Since the vector field $v$ vanishes  in $\Delta$ its image in  $j_* \qc$ vanishes as well. Hence $v\not=0$  lifts to a vector field of $\widetilde{\bP }^3 $ which restricts to a vector field of our K3 surface $\widetilde S\subset \widetilde{\bP }^3 $. This leads to a contradiction owing to $H^0(\widetilde S , \tc _{\widetilde S })=0$ \cite[p. 330]{Huy}.
		
		Assume now $0\not =v_2\in \tc _{D', {\widetilde S}}$ and consider the following commutative diagram
		
		\[
		\xymatrix{
			&\fc \ar[d]^{\tilde p} \ar[r]^{\rho}&  \bP ^{\vee} \\
			B\ar[r]^g \ar[ru]^{\widetilde g}&D'  &  
		}
		\]
		where $B$ is the unit disc in $\bC$ and $\tilde g$ is a germ passing through $x$, with tangent vector $v$ (hence $g$ is tangent to $v_2$). Of course we can assume $g$ injective. So $g$ is a deformation of $\widetilde S$, with infinitesimal deformation given by $v_2$, in the universal deformation space, preserving the quasi-polarization $h$. Correspondingly, we have a curve 
		$$\mathcal C:=\rho \circ \tilde g (B)\subset \bP ^{\vee}$$
		in the space of quartics representing a deformation of $S$. By resolving the restrition to $\mathcal C$ of the universal family of $\bP $, we find again a deformation of $\widetilde S$ parametrized by the curve $\mathcal C$. 
		Specifically, let $\mathcal H\subset \bP^3 \times \bP^{\vee}$ be the universal family of quartic surfaces of $\bP^3$ and let $\mathcal H _{\mathcal C}$ be the family of quartics parametrized by $\mathcal C$ via base change:
		\[
		\xymatrix{
			\mathcal H _{\mathcal C} \ar[d]\ar[r]& \mathcal H \ar[d]\\
			\mathcal C \ar[r] & \bP ^{\vee}
		}
		\] 
		Resolving the singularities of $\mathcal H _{\mathcal C}$ if necessary we get a deformation of $\widetilde S$ parametrized by $\mathcal C$. By the universal property of $D=\Def (\widetilde S)$, such a deformation is provided by an analytic map
		$s:\mathcal C\rightarrow D'$, thus we have
		\[
		\xymatrix{
			&\fc \ar[d]^{\tilde p} \ar[r]^{\rho}& \mathcal C \ar[ld]^{s} \\
			B\ar[r]^g \ar[ru]^{\widetilde g}&D'  &  
		}
		\]
		This diagram is commutative. Indeed,
		since any quartic surface determines the K3 surface obtained by blowing up its singularities  we find
		$$y=\rho(x')\quad  \Longrightarrow \quad \widetilde p (x')=s(y), \quad \forall y\in \mathcal C=\rho(\widetilde g (B)).$$
		Thus we have
		$$  s\circ \rho \circ \tilde g=g. $$
		We find a contradiction
		$$0\not = v_2 = dg(1)= d(s\circ \rho)(d \tilde g (1))=ds( d\rho(v))=ds(0)=0 $$
		and the claim is so proved.
		
		The morphism $\rho: \fc \rightarrow \bP $ has injective differential so it is a local isomorphism around $x$. Let $C_1, C_2, \dots , C_{\delta}$ denote the fundamental classes of the $(-2)$-curves obtained by blowing-up the nodes of $S$ and consider the  subvariety
		$$D _{\delta}:= D\cap \langle h, C_1, C_2, \dots , C_{\delta}\rangle ^{\perp}$$
		parametrizing the deformations of $\widetilde S$ which preserve such curves in the Picard group.
		Accordingly, via base change, we get a subvariety of $\fc$: 
		\[
		\xymatrix{
			\fc _{\delta} \ar[d] \ar[r]& \fc\ar[d]^{\widetilde p} \\
			D_{\delta} \ar[r] & D' 
		}
		\] 
		By  \cite[Section VI 2.4]{Huy} we have 
		$$\rho ^{-1}(\vc _{\delta})=\fc _{\delta}.$$
		As in Notations \ref{notations1}, consider the period domain of K3 surfaces $Q \subset \bP^{21}=\bP(H^2(\widetilde S; \bC)).$ The tangent hyperplane to $Q$ at $\widetilde S$ is the projectivization of the  orthogonal subspace to a cocycle $c^{2,0}$ generating $ H^{2,0}(\widetilde S; \bC)$.
		If the Zariski tangent space $T_{\vc _{\delta}}$ had larger dimension than expected then the same would happen for
		$$d\rho ^{-1}T_{\vc _{\delta}}=\tc _{\fc_{\delta}, x}\cong T_e\bP \mathbb{GL} (4, \bC)\times \langle c^{2,0}, h, C_1, C_2, \dots , C_{\delta}\rangle ^{\perp}.$$
		In view of the non-degeneracy of the intersection form, this would amount to a linear relation in $H^2(\widetilde S; \bC)$ between
		$c^{2,0}, h, C_1, C_2, \dots , C_{\delta}$. This is impossible.
	\end{proof}
	
	\vskip2mm
	
	By \cite[Theorem 3.3]{DGF}, the previous result implies:

	\begin{corollary}
		\label{normcross} Fix $1\leq \delta \leq 16$. The locally closed set $\vcd\subset \vX$, parametrizing quartic surfaces with $\delta$ nodes, is smooth of the expected dimension 
		$$\dim \vcd =34 -\delta.$$
		Assume  $H\in \vcd$, so that $$S:=X\cap H\subset \bP^3,$$ is a quartic surface with
		$\delta$ nodes. For every sufficiently
		small ball $B\subseteq \bPd$ containing $H$, the variety  $B\cap \vX$ is a divisor of $B$ with normal crossings
		
	\end{corollary}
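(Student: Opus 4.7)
The plan is to derive the corollary as an essentially formal consequence of Theorem \ref{indepcond} combined with \cite[Theorem 3.3]{DGF}. First I would recall the local structure of the dual hypersurface at a point $H \in \vcd$ corresponding to a surface $S=X\cap H$ with nodes $p_1,\dots,p_\delta$: in a small analytic ball $B$ around $H$, the variety $B\cap \vX$ decomposes as a union of $\delta$ smooth analytic branches $\vX^{(1)},\dots,\vX^{(\delta)}$, where $\vX^{(i)}$ parametrizes the quartics whose singularity at $p_i$ persists under small deformation. Under the identification $T_H\bPd \cong H^0(\bP^3,\oc_{\bP^3}(4))/\langle f_S\rangle$, the hyperplane $T_H\vX^{(i)}$ is exactly the image of those global sections vanishing at $p_i$.

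Next I would translate Theorem \ref{indepcond} into this tangent space picture. The independence of conditions imposed by $\Delta=\{p_1,\dots,p_\delta\}$ on $|\oc_{\bP^3}(4)|$, namely $\dim H^0(\ic_{\bP^3,\Delta}(4))=35-\delta$, says precisely that the $\delta$ linear conditions ``vanishing at $p_i$'' are linearly independent on $H^0(\bP^3,\oc_{\bP^3}(4))$. Passing to the quotient by $\langle f_S\rangle$ (whose vanishing set already contains $\Delta$), this means that the $\delta$ tangent hyperplanes $T_H\vX^{(i)}\subset T_H\bPd$ are in general position, i.e.\ their intersection has codimension exactly $\delta$. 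This is the hypothesis required in \cite[Theorem 3.3]{DGF}.

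Applying \cite[Theorem 3.3]{DGF} then yields, in one stroke, both conclusions of the corollary: the $\delta$ smooth branches $\vX^{(i)}$ meet transversally at $H$, so $B\cap \vX=\bigcup_{i=1}^\delta (B\cap \vX^{(i)})$ is a normal crossings divisor in $B$; and the locally closed stratum $\vcd$ coincides near $H$ with the transverse intersection $\bigcap_{i=1}^\delta \vX^{(i)}$, which is therefore smooth of codimension $\delta$ in $\bPd$, i.e.\ of dimension $34-\delta$. Since $H\in \vcd$ was arbitrary, both statements follow. There is no real obstacle at this stage: the deformation-theoretic heart of the argument is already contained in Theorem \ref{indepcond}, and the only task left is the routine bookkeeping translating tangent hyperplanes into linear conditions on quartics so as to match the set-up of \cite[Theorem 3.3]{DGF}.
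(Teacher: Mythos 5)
Your proposal is correct and follows essentially the same route as the paper: the paper likewise deduces the corollary from Theorem \ref{indepcond} via \cite[Theorem 3.3]{DGF}, identifying the smooth local branches of $\vX$ with the nodes (via the unramifiedness of $\pi_1:\con(X)\to\bPd$ over the nodal locus, \cite[Proposition 3.3]{N:K}), noting that the tangent hyperplane of the branch attached to $p_i$ is $p_i$ itself viewed as a hyperplane of $\bPd$, and translating the independence of the nodes into transversality of these branches. The only detail you leave implicit is the justification that the branches are indeed smooth and indexed by the nodes, which the paper supplies by the citation to Katz.
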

     \noindent	(we observe that the variety $X$ appearing in
     \cite[Theorem 3.3]{DGF} has even dimension. Nonetheless, the argument applies also when
	the dimension is odd).
	
	In short, this result is achieved by combining two fundamental facts. First of all,  by \cite[Proposition 3.3]{N:K} the morphism $\pi_1: \con(X) \to \bPd$ is unramified over the nodal locus. This  in turn implies that
	$\pi_1$ provides an isomorphism of a suitable analytic neighborhood of  a pair $(p, S)\in \con(X)$ ($p$ is a node of $S$) with a branch of $X^{\vee}$ at $S$. The other relevant fact  \cite[p. 209]{Harris}
	is to observe that the embedded tangent space of such a branch is $p$, viewed as a hyperplane of $\mathbb P ^{\vee}$. So the different branches of $X^{\vee}$ meet transversally at any point of $ \vcd $ as soon as the nodes  impose independent conditions to the linear system.

	\section{The decomposition theorem}
	
	We keep the same notations of the previous sections. If $H \in U:=\bPd \backslash X^{\vee}$, then $\xc_H$ is smooth. Thus
	$\pi: \pi^{-1}(U) \rightarrow U$ is a smooth fibration and the sheaves  $R^{i}\pi{_*}\mathbb{Q}_{\xc}$ restrict to  local systems on  $U$, in the following denoted by $R^{i}\pi{_*}\mathbb{Q}_{\xc} \mid _U$. The general fibre of 
	$R^{2}\pi{_*}\mathbb{Q}_{\xc}\mid _U$ represents the intermediate cohomology of the (smooth) general fibre of $\pi$.
	
	By \cite[Section 2]{DeCMHodgeConj}, the decomposition theorem applied to $\pi$ provides a non-canonical decomposition
	\begin{equation}
		\label{decThm}  R\pi{_*}\mathbb{Q}_{\xc}\cong \bigoplus_{i\in \mathbb{Z}}\bigoplus _{j\in \mathbb{N}}IC(L_{ij})[-i-34)], \quad \text{in} \,\,\, D_c^b(\bPd),
	\end{equation}
	($D_c^b(\bPd)$ is the derived category of $\bQ$-vector sheaves on $\bPd$). $L_{ij}$ denotes a suitable local system on a stratum of codimension  $j$ in $\bPd$.  In particular, we have 
	$$L_{i0}=R^{i}\pi{_*}\mathbb{Q}_{\xc} \mid _U$$ 
	(compare with \cite[(2.5)]{DeCMHodgeConj}, where a slightly different notation is used).  The intersection cohomology complexes $IC(L_{ij})$ are \textit{semisimple perverse sheaves} \cite[Section 5]{Dimca2}.
	
	\begin{notations}
		We denote by $N\subset X^{\vee}$ the closed set parametrizing quartic surfaces with at least one singular point that is not a node. 	
	\end{notations}
	
	Our main aim is to determine, as explicitly as we can, the decomposition above in $N^*$, the open set parametrizing either smooth or nodal quartic surfaces. Our main result is the following.
	
	\begin{theorem}
		\label{decthm} In the Zariski open set $N^*\subset \bPd$, parametrizing smooth and nodal quartic surfaces,  we have ($0\leq i\leq 4$):
		$$IC(L_{i0})[-34]\mid_{N^*}=R^{i}\pi{_*}\mathbb{Q}_{\xc}\mid_{N^*},\,\,\, \quad \text{in} \quad D_c^b(N^*). $$	
	\end{theorem}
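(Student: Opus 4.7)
The plan is to work locally on $N^{*}$, combining the normal-crossings structure of $X^{\vee}$ around the Severi strata (Corollary~\ref{normcross}) with a Picard--Lefschetz analysis of the nodal fibres, in order to identify $R\pi_{*}\bQ_{\xc}|_{N^{*}}$ with $\bigoplus_{0\leq i\leq 4}IC(L_{i0})[-i-34]|_{N^{*}}$ and rule out any summand $IC(L_{ij})$ with $j\geq 1$ appearing in the restriction of \eqref{decThm}.

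Fix $H\in\vcd\subset N^{*}$ (the statement is automatic on $U$, so assume $\delta\geq 1$). By Corollary~\ref{normcross} there is a polydisc $B\subset\bPd$ centered at $H$ with analytic coordinates $(z_{1},\dots,z_{\delta},w_{1},\dots,w_{34-\delta})$ such that $B\cap X^{\vee}=\{z_{1}\cdots z_{\delta}=0\}$, with the branch $\{z_{i}=0\}$ parametrizing quartics in which the $i$-th node of $S:=\xc_{H}$ persists. The total space $\xc$ is smooth (it is a projective bundle over the smooth Veronese $X\cong\bP^{3}$), and each node of $S$ is an ordinary double point; hence near each node the morphism $\pi$ is the standard Morsification with transverse coordinate $z_{i}$. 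Two computations follow. First, by proper base change the stalk of $R^{i}\pi_{*}\bQ_{\xc}$ at $H$ is $H^{i}(S,\bQ)$; contracting the $\delta$ disjoint $(-2)$-curves in the minimal resolution $\widetilde S\to S$ gives $\chi(S)=24-\delta$, and since $S$ remains simply connected, one has $b_{0}=b_{4}=1$, $b_{1}=b_{3}=0$, $b_{2}=22-\delta$. Second, by Picard--Lefschetz the monodromy of $L_{20}=R^{2}\pi_{*}\bQ_{\xc}|_{U}$ around $\{z_{i}=0\}$ acts as the involution $\alpha\mapsto\alpha+(\alpha\cdot v_{i})v_{i}$, where $v_{i}\in H^{2}(\widetilde S,\bQ)$ is the vanishing class corresponding to the exceptional $(-2)$-curve $E_{i}$; the $v_{i}$ are linearly independent (as disjoint $(-2)$-curves), so the full local monodromy factors through a finite action of $(\mathbb{Z}/2)^{\delta}$, while on $L_{i0}$ for $i\neq 2$ the action is trivial.

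For a local system with finite abelian local monodromy on the complement of a simple normal-crossings divisor, the stalk of its intersection cohomology extension at the deepest stratum is the space of monodromy invariants, concentrated in a single cohomological degree. In our setting these invariants amount to $\langle v_{1},\dots,v_{\delta}\rangle^{\perp}$, of rank $22-\delta$, for $L_{20}$, and the full fibre for $L_{i0}$ with $i\neq 2$. These match the stalks of $R^{i}\pi_{*}\bQ_{\xc}$ at $H$ computed above. Since by \eqref{decThm} the restriction $R\pi_{*}\bQ_{\xc}|_{N^{*}}$ splits as a direct sum of intersection cohomology complexes, this stalkwise coincidence leaves no room for a supplementary summand: a nontrivial $IC(L_{ij})$ with $j\geq 1$ supported on $\overline{\vc_{\delta'}}\cap N^{*}$ would contribute extra cohomology at a generic point of its support, breaking the equality just verified. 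This yields the asserted identification.

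\textbf{Main obstacle.} The crux is the IC-stalk calculation at the deepest stratum together with the upgrade from a stalkwise identification to an isomorphism in $D^{b}_{c}(N^{*})$. Matching ranks, combined with the semisimplicity of each Picard--Lefschetz involution and the transversality of the branches of $X^{\vee}$ provided by Corollary~\ref{normcross}, should suffice, but one must carefully verify that the nearby-cycle complex at each node contributes only in cohomological degree $2$ and that the local computations assemble into a coherent global statement; the most delicate point is controlling the interaction of the $\delta$ commuting vanishing-cycle contributions to guarantee a clean splitting of the invariant part.
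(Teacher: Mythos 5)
Your proposal is correct and shares the overall architecture of the paper's argument: localize at $H\in\vcd$, use Corollary~\ref{normcross} to get a normal\-/crossings model of $X^{\vee}$ with one branch per node, settle $i\neq 2$ by triviality of the local systems, and for $i=2$ compute the stalks of $IC(L_{20})[-34]$ via Picard--Lefschetz and match them with $H^{2}(S)$. The key technical lemma, however, is different. The paper computes the IC stalks with the Cattani--Kaplan--Schmid/Kerr--Pearlstein complex built from Deligne's canonical extension and the logarithms $N_{\rho}$ of the monodromy: the relation $N_{\alpha}N_{\beta}=0$ (disjoint vanishing spheres) truncates that complex to degrees $0,1$, and the excision sequence $0\to H^{2}(S)\to H^{2}(S_{t})\to\bigoplus_{\rho}\sigma_{\rho}^{\vee}\cdot\bQ\to 0$ kills $\hc^{1}$ and identifies $\hc^{0}$ with $H^{2}(S)$ directly, so the theorem follows without comparing against the other summands of~(\ref{decThm}). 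You instead note that the local monodromy is a finite quotient of $(\mathbb{Z}/2)^{\delta}$ (commuting Picard--Lefschetz reflections) and invoke the principle that the intermediate extension of a local system with finite monodromy across a normal\-/crossings divisor has stalk equal to the monodromy invariants concentrated in a single degree; this is true and arguably more elementary, but it is the crux of your argument and you assert it without proof or reference (it can be justified by realizing the local system as a direct summand of the pushforward of the constant sheaf under the standard local cyclic covers, whose total spaces are smooth). You then conclude by a stalk\-/dimension count against the splitting~(\ref{decThm}), which is in substance the paper's proof of Corollary~\ref{simplesummands}; this does yield the theorem, because the concentration in degree $0$ lets you take $\hc^{i}$ of both sides of the decomposition. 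Two small points to tighten: the classes $v_{i}$ are vanishing cycles in $H^{2}(S_{t},\bQ)$ of the nearby smooth fibre, not exceptional curves in $H^{2}(\widetilde S,\bQ)$ of the resolution --- what you actually need is $v_{i}\cdot v_{j}=-2\delta_{ij}$ for disjoint vanishing spheres, which gives both the linear independence and the commutation of the reflections; and the coincidence of stalk dimensions must be explicitly promoted to an isomorphism of sheaves, which your route does provide via the direct sum decomposition but which should be spelled out.
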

	\begin{proof}
		First of all, we consider the easiest cases i.e. $i\not =2$. We observe that, if 	$i\not =2$, then the sheaf
		$ R^i\pi{_*} \mathbb{Q} _{\xc} $ is a trivial local system on $N^*$. 
		This is obvious for the extremal cases $i=0,4$, because the fibre at $H$ of the sheaf $ R^i\pi{_*} \mathbb{Q} _{\xc} $  is $H^i(X\cap H, \mathbb Q )\cong \mathbb Q$, if $i=0, 4$, and the cohomology in top and lowest degree is invariant via monodromy.  As for the cases $i=1, 3$, this is consequence of the fact that any nodal quartic $S$ has the cohomology which embeds  in the cohomology of its minimal resolution, which is a K3 surface. Thus we have 
		$$H^1(S)=H^3(S)=0, \quad \text{if} \quad S=X\cap H, \,\,\, \forall H\in N^*,$$
		hence $$R^{1}\pi{_*}\mathbb{Q}_{\xc}\mid_{N^*}=R^{3}\pi{_*}\mathbb{Q}_{\xc}\mid_{N^*}= 0.$$
		
		On the other hand, the complex $IC(L_{i0})[-34]\mid_{N^*}$ is defined as the intermediate extension of $R^i\pi{_*} \mathbb{Q} _{\xc}\mid_U$ \cite[Section 5.2]{Dimca2}, which is a trivial local system as we have just observed. In the light of the smoothness of $N^*$, the complex 
		$IC(L_{30})[-34]\mid_{N^*}$ is a trivial local system as well. 
		We conclude $$IC(L_{i0})[-34]\mid_{N^*}=R^{i}\pi{_*}\mathbb{Q}_{\xc}\mid_{N^*}, \,\,\,\, \forall i\not=2.   $$

		We are left with the hardest case
		\begin{equation}\label{toprove}
			IC(L_{20})[-34]\mid_{N^*}=R^{2}\pi{_*}\mathbb{Q}_{\xc}\mid_{N^*}.	
		\end{equation}
		By Corollary  \ref{normcross}, we can assume that  the intersection $N^* \cap \vX$ is a divisor with normal crossings in the open set $N^*$. 
		
		We start with the following claim
		\begin{equation}
			\label{claim}	\hc^l (IC(L_{20})[-34]) _{H}=0,   \quad \forall H\in N^* \cap \vX, \,\,\, \forall l\geq 1
		\end{equation}
		where   $\hc^l (IC(L_{20})[-34]) _{H}$ denotes the $l$-th cohomology of the complex $IC(L_{20})[-34]$ at $H\in N^* \cap \vX$.
		
		Since the intersection $N^* \cap \vX$ is a divisor with normal crossings in the open set $N^*$, the local system 
		$L_{20}\mid_{U}=R^{2}\pi{_*}\mathbb{Q}_{\xc}\mid_{U}$ has a canonical extension to a vector bundle $\rc $
		on $N^*$ (see \cite{Deligne} and \cite{Schnell}).
		Further, in a suitable neighborhood of any hyperplane $H\in N^* \cap \vX$, the equation of
		$\vX$ has the form $t_1 \dots t_{\delta}=0$ ($\delta\leq 16$) and the local system
		$L_{20}\mid_{U}$ has monodromy operators
		$T_1, \dots , T_{\delta}$, with $T_{\rho}$ given by moving around the
		hyperplane $t_{\rho}=0$. If we denote by $N_{\rho}$ the \emph{logarithm of
			the monodromy operator} $T_{\rho}$, by \cite{CKS} and \cite[p. 322]{KP}
		the cohomology  $$\hc^l (IC(L_{20})[-34]) _{H}$$ of the intersection cohomology complex at $H\in N^* \cap \vX$
		can be computed as the $l$-th cohomology of the complex of
		finite-dimensional vector spaces
		\begin{equation}\label{complexB}
		B^p:=\bigoplus _{i_1<i_2< \dots < i_p}N_{i_1}N_{i_2}\dots N_{i_p}\rc_H, 
	\end{equation}
		with differential acting on the summands by the rule
		$$N_{i_1}\dots \hat{N}_{i_r}\dots N_{i_{p+1}}\rc_H \stackrel{(-1)^{r-1} N_{i_r}}{\longrightarrow} N_{i_1}\dots N_{i_r}\dots N_{i_{p+1}}\rc_H.$$
		Fix $H\in N^* \cap \vX$. Since $S:=H\cap X$ is nodal, the logarithm of
		the monodromy operator $N_{\rho}$ acts according to the
		\emph{Picard-Lefschetz formula}. Furthermore, as $S$ has
		$\delta$ ordinary double points, the vanishing spheres are disjoint
		to each other and we have
		$$N_{\alpha} N_{\beta}=0, \quad \text{for any} \quad \alpha\not= \beta.$$
		So the complex above is concentrated in degrees $0$ and $1$:
		$$\hc^l (IC(L_{20})[-34]) _{H}=0,   \quad \forall H\in N^* \cap \vX, \,\,\, \forall l\geq 2.$$
		In order to prove the claim (\ref{claim}), we have only to take care of the case $l=1$.

		The complex (\ref{complexB}) leads to the following exact sequence
		\begin{equation}\label{computeIC}
			0\to \hc^0 (IC(L_{20})[-34]) _{H}\to
			\rc_H \to  \bigoplus_{\rho=1}^{\delta} N_{\rho}\rc _H \to \hc^1 (IC(L_{20})[-34]) _{H} \to 0.
		\end{equation}
		
		Consider a hyperplane $H_t\in U$ very near to
		$H$, such that $S_t:=H_t\cap X$ is smooth.  Denote by $B_{\rho}$ a small
		ball around the $\rho$-th node of $S$, and   by $\sigma_{\rho}$ the Milnor sphere in $S_t \cap B_{\rho}$.
		
		By the Picard-Lefschetz formula, the map 
		$$\rc_H \rightarrow  \bigoplus_{\rho=1}^{\delta} N_{\rho}\rc _H $$
		of (\ref{computeIC}) coincide with
		\begin{equation}
			\label{PL}	H^2(S_t) \rightarrow  \bigoplus_{\rho=1}^{\delta} \sigma_{\rho}^{\vee}\cdot \bQ, \quad \psi \in H^2(S_t) \rightarrow  \bigoplus_{\rho=1}^{\delta} \langle \psi, \sigma_{\rho}\rangle\sigma_{\rho}^{\vee}.
		\end{equation}
		
		On the other hand, by excision, we have
		$$H^l(S, \cup _{\rho} (S\cap B_{\rho}))\cong  H^l(S_t, \cup _{\rho} (S_t\cap B_{\rho})).$$
		Combining the exact sequence
		$$ 0=  H^{1}( \cup _{\rho} (S\cap B_{\rho})) \rightarrow H^2(S, \cup _{\rho} (S\cap B_{\rho}))\rightarrow
		H^2(S)\rightarrow H^{2}( \cup _{\rho} (S\cap B_{\rho})) =0 ,$$
		where we have taken into account the conic nature of isolated singularities of a divisor \cite{Milnor}, with the following one
		$$ \dots \rightarrow H^{1}( \cup _{\rho} (S_t\cap B_{\rho})) \rightarrow H^2(S_t, \cup _{\rho} (S_t\cap B_{\rho}))\rightarrow
		H^2(S_t)\rightarrow H^{2}( \cup _{\rho} (S_t\cap B_{\rho}))\rightarrow ,$$
		we find the exact
		sequence
		\begin{equation}
			\label{computeIC2}  0 \to   H^{2}(S)\to
			H^{2}(S_t)\to  H^{2}( \cup _{\rho} (S_t\cap
			B_{\rho}))\cong\bigoplus_{\rho=1}^{\delta} \sigma_{\rho}^{\vee}\cdot \bQ\to 0
		\end{equation}
		(recall that $H^{3}(S)=0$ holds true for any nodal quartic $S$).
		
		Combining (\ref{computeIC}), (\ref{PL}) and
		and (\ref{computeIC2}), we find
		$$\hc^0 (IC(L_{20})[-34]) _{H}\cong H^{2}(S)\cong (R^{2}\pi_* \bQ_{\xc})_{H} $$
		and
		$$\hc^1 (IC(L_{20})[-34]) _{H}=0.
		$$
		The claim (\ref{claim}) follows.
		
		By the claim, the intersection cohomology complex $IC(L_{20})[-34]$ is indeed a sheaf in   $ D_c^b(N^*)$, hence (\ref{toprove}) is a consequence of the isomorphism $$\hc^0 (IC(L_{20})[-34]) _{H}\cong (R^{2}\pi_* \bQ_{\xc})_{H}, \quad \forall H\in N^*, $$ just proved.
	\end{proof}

	\begin{corollary}\label{simplesummands}
		In the derived category $D_c^b(N^*)$, the splitting in simple summands provided by the decomposition theorem  (\ref{decThm})  is 
		$$R\pi{_*}\mathbb{Q}_{\xc}\mid_{N^*}\cong \bigoplus_{0\leq i \leq 4}R^{i}\pi{_*}\mathbb{Q}_{\xc}[-i]\mid_{N^*}= \bQ _{N^*}\oplus R^{2}\pi{_*}\mathbb{Q}_{\xc}[-2]\mid_{N^*}\oplus \bQ _{N^*}[4].$$
	\end{corollary}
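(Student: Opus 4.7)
The plan is to extract the corollary directly from Theorem \ref{decthm} combined with the full splitting \eqref{decThm}. Restricting \eqref{decThm} to $N^*$ one obtains
$$R\pi_*\mathbb{Q}_{\xc}|_{N^*} \;\cong\; \bigoplus_{i\in\mathbb{Z}} IC(L_{i0})[-i-34]|_{N^*} \;\oplus\; \mathcal{R},$$
where $\mathcal{R} := \bigoplus_{i\in\mathbb{Z},\,j\geq 1}IC(L_{ij})[-i-34]|_{N^*}$ collects the contributions of the non-general supports. By Theorem \ref{decthm}, each summand $IC(L_{i0})[-i-34]|_{N^*}$ with $0 \leq i \leq 4$ rewrites as $R^i\pi_*\mathbb{Q}_{\xc}[-i]|_{N^*}$, and since the fibres are complex surfaces the remaining indices contribute zero. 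So the first equality of the statement reduces to the vanishing $\mathcal{R} \cong 0$.

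This vanishing is established by comparing cohomology sheaves. Both $\mathcal{H}^k(R\pi_*\mathbb{Q}_{\xc}|_{N^*})$ and $\mathcal{H}^k(\bigoplus_{i=0}^{4} R^i\pi_*\mathbb{Q}_{\xc}[-i]|_{N^*})$ equal $R^k\pi_*\mathbb{Q}_{\xc}|_{N^*}$ for every $k$, so $\mathcal{H}^k(\mathcal{R}) = 0$ for all $k$, and hence $\mathcal{R} \cong 0$ in $D_c^b(N^*)$.

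For the second equality, it remains to identify each $R^i\pi_*\mathbb{Q}_{\xc}|_{N^*}$, and this has essentially been handled inside the proof of Theorem \ref{decthm}. The extremal sheaves $R^0\pi_*\mathbb{Q}_{\xc}|_{N^*}$ and $R^4\pi_*\mathbb{Q}_{\xc}|_{N^*}$ are the trivial local systems $\mathbb{Q}_{N^*}$, via the usual trivialisation by the constant function and the fundamental class together with the monodromy invariance of cohomology in top and bottom degree. For the odd sheaves, any nodal quartic surface $S$ has cohomology that injects into that of its minimal K3 resolution $\widetilde{S}$, giving $H^1(S) = H^3(S) = 0$; hence $R^1\pi_*\mathbb{Q}_{\xc}|_{N^*} = R^3\pi_*\mathbb{Q}_{\xc}|_{N^*} = 0$. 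No real obstacle is expected in this last step, since all the technical content has already been discharged in Theorem \ref{decthm}; the only care needed is to check that the three summands so obtained are genuinely among those produced by the semisimple splitting \eqref{decThm}, which is clear because $\mathbb{Q}_{N^*}$ and $\mathbb{Q}_{N^*}[4]$ coincide with the intermediate extensions to $N^*$ of the local systems $L_{00}|_U$ and $L_{40}|_U$.
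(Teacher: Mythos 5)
Your argument for the displayed isomorphism itself follows the paper's route: restrict the decomposition \eqref{decThm} to $N^*$, use Theorem \ref{decthm} to identify the general-support terms with $R^{i}\pi_*\mathbb{Q}_{\xc}[-i]\mid_{N^*}$, and kill the residual summand by comparing cohomology sheaves stalkwise (the paper calls your $\mathcal{R}$ simply $P$ and argues identically). The identification of the extremal and odd direct images is also the same as in the proof of Theorem \ref{decthm}.

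However, there is a genuine gap: the corollary asserts that the displayed decomposition is the splitting into \emph{simple} summands, and you never verify that $R^{2}\pi_*\mathbb{Q}_{\xc}[-2]\mid_{N^*}$ is a simple perverse sheaf. Your closing remark only checks that $\mathbb{Q}_{N^*}$ and $\mathbb{Q}_{N^*}[4]$ are intermediate extensions of $L_{00}\mid_U$ and $L_{40}\mid_U$; but an intermediate extension $IC(L)$ is simple precisely when the local system $L$ is irreducible, and for the middle summand this is exactly the nontrivial point. The paper closes this by invoking \cite[Theorem 5.2.12]{Dimca2} to reduce simplicity of $R^{2}\pi_*\mathbb{Q}_{\xc}[-2]\mid_{N^*}$ to the irreducibility of the local system $R^{2}\pi_*\mathbb{Q}_{\xc}\mid_{U}$, which is the classical irreducibility of the monodromy representation on the second cohomology of a smooth quartic surface (cf.\ \cite[Section 3.2.3]{Voisin2}). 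Without some such input, you have proved the isomorphism of complexes but not that its summands cannot be decomposed further, which is what distinguishes this corollary from Theorem \ref{decthm}.
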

	\begin{proof}
		Combining (\ref{decThm}) with Theorem  \ref{decthm}	we find
		$$R\pi{_*}\mathbb{Q}_{\xc}\mid_{N^*}\cong \bigoplus_{0\leq i \leq 4}R^{i}\pi{_*}\mathbb{Q}_{\xc}[-i]\mid_{N^*} \oplus P,$$
		for a suitable constructible complex $P$. For any $H\in N^*$, put $S:=X\cap H\subset \bP^3$ the corresponding quartic surface. We have 
		$$H^i(S, \bQ)=\hc^i(R\pi{_*}\mathbb{Q}_{\xc})_H\cong \hc^i(\pi{_*}\mathbb{Q}_{\xc}[-i])_H \oplus \hc^i(P)_H=H^i(S, \bQ)\oplus \hc^i(P)_H.$$
		Hence 
		$$\hc^i(P)_H=0, \quad \forall i,\,\,\forall H\in N^*,$$
		meaning that $P$ vanishes in $D_c^b(N^*)$. In order to conclude, it suffices to observe that 
		$R^{2}\pi{_*}\mathbb{Q}_{\xc}\mid_{N^*}$ is simple in the category of perverse shaves of $N^*$. By \cite[Theorem 5.2.12]{Dimca2}, this is equivalent to the irreducibility of the local system $R^{2}\pi{_*}\mathbb{Q}_{\xc}[-2]\mid_{U}$, which is a classical result (compare  for instance with \cite[Section 3.2.3]{Voisin2}).
	\end{proof}

	\begin{remark}
		\label{supports}\begin{enumerate}
			\item 
			Our result shows that no support  other than the general one intersects the Zariski open set $N^*$ (so the supports are contained in $N$). In particular, \textit{all the supports other than the general one are disjoint from Severi's varieties $\vcd$, whatever the number of nodes}.	
			\item The resulting isomorphism
			$$R\pi{_*}\mathbb{Q}_{\xc}\mid_{N^*}\cong \bigoplus_{0\leq i \leq 4}R^{i}\pi{_*}\mathbb{Q}_{\xc}[-i]\mid_{N^*}$$
			can be stated by saying  that \textit{the derived direct image complex $R\pi{_*}\mathbb{Q}_{\xc}\mid_{N^*}$ is quasi isomorphic to the direct sum of its cohomology sheaves}. In other words, Corollary \ref{simplesummands} can be seen as a \emph{formality theorem} for the complex $R\pi{_*}\mathbb{Q}_{\xc}\mid_{N^*}$.
		\end{enumerate}
	\end{remark}
	
	\section{The moduli space of Kummer surfaces.}
	
	\vskip2mm
	In this section our aim is to determine the decomposition in simple summands of $R\pi{_*}\mathbb{Q}_{\xc}\mid_T$, in a tubular neighborhood $T$ of the Severi variety of \emph{Kummer quartics}.
	\begin{notations}
		Set $\vc:= \vc _{16}$ the Severi variety parametrizing Kummer quartics. By Corollary \ref{normcross}, there is a tubular neighborhood $T$ of $\vc$ such that $\vX \cap T$ is a divisor with normal crossings in $T$. Set $T^0:= T\backslash \vX \cap T$, the locus of smooth quartic surfaces that are  ``near'' to a Kummer. Fix $H\in \vc$ and consider a hyperplane $H_t\in T^0$ very near to
		$H$, so that $S_t:=H_t\cap X$ is smooth. Denote by $B_{\rho}$, $1\leq \rho \leq 16$, a small
		ball around the $\rho$-th node of $S$, and   by $\sigma_{\rho}\in H_2(S_t, \mathbb Q)$ the corresponding Milnor sphere in $S_t \cap B_{\rho}$. By the Picard-Lefschetz formula, the span
		$\langle \sigma_1^{\vee}, \dots \sigma_{16}^{\vee} \rangle\subset  H^2(S_t, \mathbb Q)$ of their Poincar\'e duals extend via monodromy to a sub-local system $\Sigma \subset R^{2}\pi{_*}\mathbb{Q}_{\xc} \mid _{T^0}$.
	\end{notations}
	
	\begin{theorem}
		With notations as above,  the orthogonal $\Sigma^{\perp}$ extends to a local system in $T$,  denoted by $\overline{\Sigma} ^{\perp}$, such that 
		$$\overline{\Sigma} ^{\perp} \mid _{\vc}= \hc^0 (IC(L_{20})[-34]) \mid _{\vc}.$$ Furthermore, the decomposition in simple  perverse sheaves of $R\pi{_*}\mathbb{Q}_{\xc}\mid_T$ is the following
		$$R\pi{_*}\mathbb{Q}_{\xc}\mid_T\cong \bQ_T\oplus \bQ_T[-2] \oplus \overline{ \Sigma}^{\perp}_{prim}[-2]\oplus \iota_*\Sigma[-2]\oplus \bQ_T[-4],$$
		where $\iota: T^0 \to T$ denotes the inclusion, and where $\overline{ \Sigma}^{\perp}_{prim}$ denotes the primitive part of $\overline{ \Sigma}^{\perp}$.
	\end{theorem}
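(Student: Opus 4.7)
The plan is to start from Corollary \ref{simplesummands}. Since $T\subset N^*$, that corollary, applied to $T$, gives
\begin{equation*}
R\pi_*\bQ_{\xc}|_T \cong \bQ_T \oplus R^2\pi_*\bQ_{\xc}[-2]|_T \oplus \bQ_T[-4],
\end{equation*}
so the task reduces to splitting $R^2\pi_*\bQ_{\xc}|_T$ as the sum (after perverse shift) of the three simple perverse sheaves $\bQ_T$, $\overline{\Sigma}^{\perp}_{prim}$ and $\iota_*\Sigma$.

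For the first assertion of the theorem I would argue via Picard--Lefschetz. The Gram matrix of $\Sigma$ on a general fiber is $-2\,\mathrm{Id}_{16}$ (the sixteen vanishing $2$-spheres are pairwise disjoint, each of self-intersection $-2$), hence $\Sigma$ is non-degenerate and $R^2\pi_*\bQ_{\xc}|_{T^0}=\Sigma\oplus\Sigma^\perp$ as local systems. The local monodromy around the $\rho$-th branch of $\vX\cap T$ is the Picard--Lefschetz reflection $\psi\mapsto \psi-\langle\psi,\sigma_\rho\rangle\sigma_\rho^\vee$, which fixes $\Sigma^\perp$ pointwise; hence $\Sigma^\perp$ has trivial local monodromy around every branch of the normal-crossings divisor $\vX\cap T$ and extends to a local system $\overline{\Sigma}^\perp$ on all of $T$. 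For $H\in\vc$, the formula of \cite{CKS} and \cite{KP} recalled in the proof of Theorem \ref{decthm} identifies $\hc^0(IC(L_{20})[-34])_H$ with $\ker\bigl(\rc_H\to\bigoplus_\rho N_\rho\rc_H\bigr)$, and Picard--Lefschetz rewrites this kernel as $\{\psi:\langle\psi,\sigma_\rho\rangle=0\ \forall\rho\}=\overline{\Sigma}^\perp|_H$, proving the identification $\overline{\Sigma}^\perp|_{\vc}=\hc^0(IC(L_{20})[-34])|_{\vc}$.

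The hyperplane class $h$ is a flat global section of $\overline{\Sigma}^\perp$ (because $h\cdot\sigma_\rho=0$ for every vanishing cycle), and the intersection pairing, non-degenerate on $\overline{\Sigma}^\perp$ since it is so on $\Sigma$, splits off a trivial rank-one factor: $\overline{\Sigma}^\perp=\bQ_T\cdot h\oplus\overline{\Sigma}^{\perp}_{prim}\cong\bQ_T\oplus\overline{\Sigma}^{\perp}_{prim}$. Next I invoke semisimplicity: $IC(L_{20})[-34]|_T=R^2\pi_*\bQ_{\xc}|_T$ is a semisimple perverse sheaf, and by Theorem \ref{decthm} no support other than the generic one meets $N^*\supset T$, so every simple summand has full support on $T$. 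Thus the orthogonal decomposition on $T^0$ propagates to a perverse-sheaf decomposition on $T$, and the complement of $\overline{\Sigma}^\perp$ must be the intermediate extension of $\Sigma$. Because each meridian acts on $\Sigma$ as a reflection with codimension-one $(+1)$-eigenspace, a stratum-by-stratum check, using the normal-crossings description from Corollary \ref{normcross}, identifies this intermediate extension with the non-derived pushforward $\iota_*\Sigma$, completing the decomposition.

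The hardest step, I expect, is this final identification: that the $\Sigma$-summand equals $\iota_*\Sigma$ rather than some other intermediate extension. It requires both the propagation argument (semisimplicity together with the support statement of Theorem \ref{decthm}) and a careful combinatorial verification that, on every multi-branch stratum of $\vX\cap T$, the local monodromy invariants of the $(\mathbb{Z}/2)^{\oplus 16}$-reflection local system $\Sigma$ agree with the IC stalks there. Simplicity of $\iota_*\Sigma$ and $\overline{\Sigma}^{\perp}_{prim}$ as perverse sheaves then reduces to the irreducibility of the underlying monodromy representations of $\pi_1(T^0)$, which uses the global permutation of the sixteen nodes as the Kummer quartic varies in $\vc$, together with the classical irreducibility of the transcendental K3 local system.
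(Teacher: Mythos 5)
Your first half tracks the paper's argument closely: the Picard--Lefschetz orthogonal splitting $R^{2}\pi_{*}\bQ_{\xc}\mid_{T^0}=\Sigma\oplus\Sigma^{\perp}$, the triviality of the meridian action on $\Sigma^{\perp}$ (the paper makes the extension across $\vX\cap T$ precise via the homotopy exact sequence of the tubular neighbourhood, but the content is the same), the identification of $\overline{\Sigma}^{\perp}\mid_{\vc}$ with $\ker\bigl(\rc_H\to\bigoplus_{\rho}N_{\rho}\rc_H\bigr)=\hc^0(IC(L_{20})[-34])_H$ via the Cattani--Kaplan--Schmid formula, and the splitting off of the polarization class. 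All of this is correct and is essentially the paper's route; your identification of the $\Sigma$-summand with $\iota_*\Sigma$ is also no less detailed than the paper's.

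The genuine gap is in the last step, which you compress into one sentence and which is in fact the bulk of the paper's proof: the \emph{simplicity} of $\iota_*\Sigma$ and of $\overline{\Sigma}^{\perp}_{prim}$. First, for $\iota_*\Sigma$ you correctly reduce to transitivity of the $\pi_1(\vc)$-action on the sixteen nodes, but you do not prove it; this is a nontrivial geometric fact, which the paper establishes by passing to the Segre cubic primal (dual to the Castelnuovo--Richmond quartic, i.e.\ the moduli space of abelian surfaces with full level-two structure) and using that the nodes of any such surface form a single orbit of the Heisenberg group. Second, for $\overline{\Sigma}^{\perp}_{prim}$ your appeal to ``the classical irreducibility of the transcendental K3 local system'' does not apply: that result (the one invoked in Corollary \ref{simplesummands}) concerns the rank-$21$ primitive cohomology of the universal family of \emph{all} smooth quartics over $U$, whereas here the relevant representation is of $\pi_1(T)\cong\pi_1(\vc)$ on a rank-$5$ local system, i.e.\ only the monodromy of the family of Kummer quartics, which is a priori much smaller. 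The paper must prove this irreducibility separately, by pulling back along the frame bundle over the moduli space $\ac_{(2,2)}$ of $(2,2)$-polarized abelian surfaces, identifying $\psi^{*}\overline{\Sigma}^{\perp}_{prim}$ with the primitive part of $\xi^{*}R^{2}\tau_{*}\bQ_{M}$, and invoking the irreducibility of the corresponding representation of the symplectic group. Without these two arguments your displayed splitting is a decomposition into perverse sheaves but not, as the statement requires, into simple ones.
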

	
	\begin{proof}
		We use the  notations of the proof of Theorem \ref{decthm}. In particular,  the local system 
		$R^{2}\pi{_*}\mathbb{Q}_{\xc}\mid_{T^0}$ has a canonical extension to a vector bundle $\rc $
		on $T$,
		in a suitable neighborhood of any hyperplane $H\in \vc$ we have  the  monodromy operators
		$T_1, \dots , T_{16}$ and their logarithms $N_1, \dots , N_{16}$.
		
		As $T\cap \vX$ is a divisor with normal
		crossings, if we fix a small neighborhood $B$ of $H$ in
		$\bPd$, then the local fundamental group $\pi_1(B\backslash(B\cap
		\vX), H)\cong \mathbb{Z}^{\delta}$ is independent of $H\in
		\vc$. So, the rank of the map
		$$\rc_H\to \bigoplus_{\rho} N_{\rho}\rc_H$$
		appearing in the sequence (\ref{computeIC}) is invariant and its kernel is a vector bundle on
		$T$. Furthermore, from the description of the canonical extension of $\rc$
		given e.g. in \cite[sec. 2]{Schnell}, one infers that such a kernel
		is the $\mathbb{Z}^{\delta}$-invariant part  of the local system
		$(R^{2}\pi_* \bQ_{\xc })_{ \mid T^0}\otimes \mathbb{C}$, thus it locally coincides with the claimed extension $\overline{\Sigma} ^{\perp}$ in view of the Picard-Lefscetz formula.
		Moreover, since the tubular neighborhood $T$ is homeomorphic to a
		fiber bundle on $\vc$, the long exact sequence of homotopy
		groups of $T^0$
		$$\dots \rightarrow \mathbb{Z}^{\delta}\rightarrow \pi_1(T^0, H) \rightarrow \pi_1(T, H)\cong \pi_1(\vc, H)  \rightarrow 0$$
		shows that  $\overline{\Sigma} ^{\perp}$ is in fact a local system on the whole $T$.
		The isomorphism  $$\overline{\Sigma} ^{\perp} \mid _{\vc}= \hc^0 (IC(L_{20})[-34]) \mid _{\vc}$$
		follows just taking into account the exact sequence (\ref{computeIC}). Furthermore, the local system  $\overline{\Sigma} ^{\perp}$ contains a trivial local system $ \bQ_T$ spanned by the polarization, and we have
		$$\overline{\Sigma} ^{\perp}\cong \bQ_T \oplus \overline{\Sigma} ^{\perp}_{prim},$$
		where $\overline{\Sigma} ^{\perp}_{prim}$ denotes the sub local system spanned by the primitive part of the cohomology.

		Comparing the statement with Corollary \ref{simplesummands}, since we have
		$$ R^{2}\pi{_*}\mathbb{Q}_{\xc}\mid_{T} \cong \bQ_T \oplus \overline{\Sigma} ^{\perp}_{prim}\oplus \iota_*\Sigma,$$
		we only need to prove that the last summands in the splitting are simple.
		By \cite[Theorem 5.2.5]{Dimca2}, the simplicity of an intermediary extension in the category of perverse sheaves amounts to the irreducibility of the corresponding local system.

		We start with the simplicity of $ \iota_*\Sigma$.
		We observe that that fundamental group  $\pi_1(\vc, H)$ acts on its generators $ \sigma_1^{\vee}, \dots \sigma_{16}^{\vee}$ by  ``exchanging the branches''  of the normal crossings divisor $\vX\cap T$. So, in order to conclude the proof, it suffices to prove that 
		the monodromy acts transitively on the nodes.
		In order to do this, fix a Kummer quartic $S\in \vc$ and denote by $P_1, \dots , P_{16}$ the nodes of $S$. By letting $S$ to vary in $\vc$, the points $P_1, \dots , P_{16}$ span a subvariety $\nc \subset \hc \mid_{\vc}$. The natural map $\kappa:\nc \rightarrow \vc$ is an \'etale morphism of degree $16$. In order to prove the statement, it suffices to show that $\nc $ is connected. By \cite[Theorem 10.3.18]{Dolgachev}, any Kummer quartic is projectively isomorphic to a quartic surface described by a polynomial varying in the  Segre cubic primal $\Sc \subset \vc$, which is the dual variety of the Castelnuovo-Richmond quartic $CR_4$. In order to prove our claim, it is enough to prove that 
		$\kappa^{-1}(\Sc)$ is connected, so from now on we assume $S\in \Sc$. But the connectedness of $\kappa^{-1}(\Sc)$ simply follows from the description of $CR_4$, given in \cite[p. 608]{Dolgachev}, as the moduli space of abelian surfaces with full level two structures. Indeed,  moving around in $\Sc$ we are allowed to change as we like  the level two structure of the abelian surface associated to $S$. This amounts to a transitive action of the monodromy group on the set $P_1, \dots , P_{16}$ because the nodes of any surface in $\Sc $ form an orbit of the Heisenberg group \cite[p. 608]{Dolgachev}.
		
		As for the irreducibility  of $ \overline{ \Sigma}^{\perp}_{prim}$, since $ \pi_1(T, H)\cong \pi_1(\vc, H) $ ($H\in \vc$), we are left with the prove of the irreducibility of   $\overline{\Sigma} ^{\perp}_{prim} \mid _{\vc}$.
		
		To this end we argue as in Section 1. Consider $\ac= \ac _{(2,2)}$ the \textit{analytic moduli space of abelian surfaces with polarization} $(2,2)$, i.e. the quotient 
		$$\hc_2/G_{(2,2)},$$
		where $\hc_2$ is the Siegel upper half space and where the discrete group $G_{(2,2)}$ is defined in \cite[\S 8]{BL}. The moduli space $\ac$ is a coarse moduli space for the functor parametrizing isomorphisms classes of abelian surfaces of type $(2,2)$ \cite[Corollary 8.2.7]{BL}.
		
		Set
		$$\tau: M \rightarrow \ac$$
		the universal family over $\ac$ and by $\lc \rightarrow M$ the line bundle on $M$ providing the polarization $(2,2)$.
		The direct image $\tau_*(\lc)$ is a vector bundle of rank $4$ on $\ac$.
		As in Section 2, consider the projective bundle 
		$$\xi:\ec:=\bP(Fr(\tau_*(\lc)))\rightarrow \ac,$$
		where $Fr(\tau_*(\lc))$  denotes the frame bundle of the  direct image $\tau_*(\lc).$

		A point in $\ec$ is a pair $(F,A)$, where $A\in \ac$ is an abelian surface and $F$ is a frame of the linear system
		$\mid \lc_A \mid ,$
		thus representing a map
		$A\rightarrow \bP^3,$
		whose image is a Kummer quartic in $\bP^3$.
		
		By letting $A$ to vary in $\ac$, we get a morphism
		\begin{equation}\label{univfam}
			\phi: M\times _{\ac} \ec\rightarrow \bP^3 \times \ec,	
		\end{equation}
		whose image is the restriction to $\vc$ of  $\mathcal H\subset \bP^3 \times \bP$, the universal family of quartic surfaces of $\bP^3$.
		By the universal property of Hilbert scheme, the family (\ref{univfam}) can be recovered from $\hc$  via base change with a morphism 
		\begin{equation}
			\label{psi}
			\psi: \,\, \ec \longrightarrow	\vc.
		\end{equation}
		To prove the irreducibility of $\overline{\Sigma} ^{\perp}_{prim} \mid _{\vc}$ it suffices to observe that its pull-back 	 $\psi^*\overline{\Sigma} ^{\perp}_{prim} \mid _{\vc}$ is irreducible since it coincides with the primitive part of
		$\xi^* R^2\tau{_*}\mathbb Q _M. $

	Indeed, the irreducibility of such a  local system is a consequence of the irreducibility of the action of the symplctic group on the primitive cohomology of a polarized abelian surface, which follows from the representation theory of  symplectic group (compare with \cite[par. 10.2]{FH}), and which is the main ingredient of the proof of the Hodge conjecture for the general polarized abelian variety (compare with \cite[Ch. 17]{BL}).

	\end{proof}

\end{document}